\theoremstyle{plain}
\newtheorem{prop}{Proposition}
\newtheorem*{prop*}{Proposition}
\newtheorem*{thm*}{Theorem}
\newtheorem{cor}[prop]{Corollary}
\newtheorem{lem}[prop]{Lemma}
\newtheorem{question}[prop]{Question}
\newtheorem{problem}[prop]{Problem}
\newtheorem*{convention*}{Convention}
\newtheorem{thmintro}{Theorem}
\theoremstyle{definition}
\newtheorem*{defn*}{Definition}
\newtheorem{rem}[prop]{Remark}
\newtheorem*{rem*}{Remark}
\newtheorem*{rems*}{Remarks}
\newtheorem*{scholium*}{Scholium}
\newtheorem*{example*}{Example}
\newcommand{\ro}{\varrho}
\newcommand{\fhi}{\varphi}
\newcommand{\teta}{\vartheta}
\newcommand{\se}{\subseteq}
\newcommand{\lra}{\longrightarrow}
\newcommand{\inv}{^{-1}}
\newcommand{\CC}{\mathbf{C}}
\newcommand{\NN}{\mathbf{N}}
\newcommand{\PP}{\mathbf{P}}
\newcommand{\RR}{\mathbf{R}}
\newcommand{\GL}{\mathbf{GL}}
\newcommand{\SL}{\mathbf{SL}}
\newcommand{\Id}{\mathrm{Id}}
\newcommand{\Jac}{\mathrm{Jac}}
\newcommand{\Vol}{\mathrm{Vol}}
\newcommand{\sI}{\mathscr{I}}
\newcommand{\sL}{\mathscr{L}}
\newcommand{\sU}{\mathscr{U}}
\newcommand{\mes}{\sL^\infty}
\DeclareMathOperator*{\Lim}{Lim}
\newcommand{\one}{\boldsymbol{1}}
\newcommand{\acts}{\curvearrowright}
\begin{document}
\title{Equivariant measurable liftings}
\author[N. Monod]{Nicolas Monod$^\ddagger$}
\address{EPFL, 1015 Lausanne, Switzerland}
\thanks{$^\ddagger$Supported in part by the ERC}
%
%
%
\begin{abstract}
We discuss equivariance for linear liftings of measurable functions. Existence is established when a transformation group acts amenably, as e.g.\ the M\"obius group of the projective line.

Since the general proof is very simple but not explicit, we also provide a much more explicit lifting for semi-simple Lie groups acting on their Furstenberg boundary, using unrestricted Fatou convergence. This setting is relevant to $L^\infty$-cocycles for characteristic classes.
\end{abstract}
\maketitle

\section{Introduction}
\subsection{Context}
Let $f$ be a measurable function on $\RR$, in the sense of Lebesgue (real or complex). In many cases, two such functions are identified if they agree almost everywhere. That is, the actual function $f$ is forsaken for its \emph{function class} $[f]$.

It is not just a mere convenience to ignore null-sets; it is indeed unavoidable when we use certain tools of functional analysis for the Lebesgue spaces $L^p$ of $p$-summable function classes. For instance, $L^p$ is a dual Banach space for $1<p\leq \infty$ and therefore one can use weak-* compactness arguments. Such tools are not available if one insists instead to work with the spaces $\sL^p$ of actual functions.

\medskip
However, neglecting null-sets is a luxury that cannot \emph{always} be afforded; for instance, when it is needed to evaluate a function at a specific point or on a locus of interest which happens to be negligible in measure. This situation arises for cocycles representing characteristic classes~\cite{Burger-Iozzi_boundary, Bucher-Monod,Hartnick-Ott_arxI}. Von Neumann~\cite{vonNeumann31} has investigated the possibility to choose a section or ``lifting''
$$\lambda\colon L^\infty(\RR) \lra \sL^\infty(\RR)$$
of the quotient map $f\mapsto [f]$. Another instance where such a lifting is needed is for the proof of the general Dunford--Pettis theorem, as observed by Dieudonn\'e~\cite{Dieudonne51}. As to von Neumann, he mentions an unspecified operator-theoretical motivation suggested by Haar. It is time to recall the formal definition(s) of liftings:

\begin{defn*}
Let $X$ be a locally compact space endowed with a Radon measure. A \textbf{linear lifting} is a positive linear map $\lambda\colon L^\infty(X) \to \sL^\infty(X)$ such that $\lambda(\fhi)\in\fhi$ for all $\fhi\in L^\infty(X)$ and $\lambda([\one_X])=\one_X$. It is a \textbf{strong} linear lifting if $\lambda([f])=f$ whenever $f$ is continuous.
\end{defn*}

Awkwardly, a (strong) linear lifting is simply called a (strong) lifting when it is moreover multiplicative.

\begin{rems*}
(a)~A linear lifting does not increase the norms, where $\sL^\infty$ is endowed with the sup-norm and $L^\infty$ with the corresponding quotient norm: the essential sup-norm.

\smallskip
(b)~Von Neumann proved that there is no lifting for general \emph{unbounded} functions, see footnote p.~109 in~\cite{vonNeumann31}. When $p<\infty$, the spaces $L^p$ do not even admit linear liftings~\cite[Thm.~7]{IonescuTulcea62}.

\smallskip
(c)~When $X$ is a differentiable manifold, we shall always endow it with the Lebesgue measure associated to some Riemannian structure; the spaces $\sL^\infty(X)$ and $L^\infty(X)$ do not depend on the choice of the Riemannian structure, since the corresponding measures differ by a continuous density only.

\smallskip
(d)~In general, function classes in $L^\infty(X)$ are defined by identifying functions that agree \emph{locally} almost everywhere; as soon as the Radon measure is $\sigma$-finite, e.g.\ when $X$ is $\sigma$-compact, this is the usual a.e.\ identification as in the case of $\RR$.

\smallskip
(e)~Some authors, including Bourbaki's posse, let $\mes$ contain \emph{essentially} bounded functions and hence endow it with the quotient \emph{semi}-norm~\cite[IV\S6]{BourbakiINT16_anglais}. Thus our liftings provide a fortiori liftings into these larger spaces.
\end{rems*}

Von Neumann proved that there is a lifting $L^\infty(\RR) \to \sL^\infty(\RR)$.  An essential ingredient of his proof is Lebesgue's differentiation theorem (\S34--35 in ~\cite{Lebesgue10}), which has to be combined with a suitable form of the axiom of choice. Since Lebesgue differentiation is unaffected by translations, it follows that von Neumann's lifting can be chosen to commute with all translations of the line; this is particularly clear in Dieudonn\'e's account~\cite{Dieudonne51}. This use of differentiation makes it plain that more generally any Lie group will admit a lifting that commutes with left translations, and it is unsurprising that this can be extended to all locally compact groups~\cite{IonescuTulcea67}, thanks to the solution of Hilbert's fifth problem~\cite{Montgomery-Zippin} and to non-$\sigma$-finite versions of von Neumann's lifting~\cite{Maharam}, \cite{IonescuTulcea}.

\bigskip
The general question that we shall address in this paper is whether a lifting on a space $X$ can be equivariant under a transformation group $G\acts X$. Typically, we have in mind much larger groups of symmetries than just $G$ acting on $G$ itself. For instance, on the real line~$\RR$, one can consider the group of all affine transformations or even the group $\SL_2(\RR)$ acting projectively, in which case we add $\infty$ to the line.

It has been shown that multiplicative liftings cannot be equivariant in that setting~\cite[p.~95]{vonWeizsaecker}, and therefore we are asking for equivariant \emph{linear} liftings. Known results include the case of compact transformation groups~\cite{Johnson79} and more generally distal systems~\cite{Johnson80b}. Equivariant linear liftings also exists for countable amenable groups~\cite{IonescuTulcea65}. Unfortunately,  that argument does not apply to uncountable amenable groups such as the affine group of~$\RR$, nor to any non-discrete group. There is an intrinsic difficulty when the acting group is non-discrete, originating in the well-known fact that Fubini's theorem has no converse (this is one~\cite{Sierpinski20} of Sierpi\'nski's contributions to the first volume of \emph{Fundamenta} ---~in which he authored more than half the papers).

Specifically, one cannot first take a linear lifting on $X$ and then apply some averaging procedure over $G$. Indeed, the map $(g,x)\mapsto (\lambda g \fhi)(x)$ need not be measurable~\cite{Talagrand82}.

\subsection{Amenable actions}
It turns out that a general condition allowing us to construct equivariant linear liftings is the topological amenability of the \emph{action}, a much weaker property than the amenability of the group. For instance, $\SL_2(\RR)$ acts amenably on the projective line $\RR\cup\{\infty\}$. More generally, any semi-simple Lie group acts amenably on its Furstenberg boundary. We shall recall below the definition of amenable actions as found e.g.\ in~\cite{Anantharaman02, Anantharaman-Renault}. For now, we just point out that the condition is automatically satisfied for amenable groups but holds much more generally, for instance for $G\acts G/H$ where $G$ is an arbitrary locally compact group and $H<G$ a closed amenable subgroup.

\begin{thmintro}\label{thm:main}
Let $G$ be a locally compact group with a $C^1$-continuous amenable action on a differentiable manifold~$X$.

Then there exists a $G$-equivariant strong linear lifting $L^\infty(X)\to \mes(X)$.
\end{thmintro}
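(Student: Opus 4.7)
The plan is to start from a classical, non-equivariant strong linear lifting and average it against the approximately $G$-equivariant probability kernels furnished by topological amenability of the action, then extract a weak-$*$ limit to secure exact equivariance.

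First, by the classical results of Maharam and Ionescu--Tulcea, there exists a strong linear lifting $\lambda_0\colon L^\infty(X)\to\mes(X)$, which will typically fail to be $G$-equivariant. The group $G$ acts on such liftings by $(g\cdot\lambda)(\fhi)(x):=\lambda(g\inv\fhi)(g\inv x)$, and we seek a fixed point. Topological amenability of $G\acts X$ provides a net of weak-$*$ continuous maps $m_i\colon X\to\mathrm{Prob}(G)$ such that $\|g_*m_i(x)-m_i(gx)\|_1\to 0$ locally uniformly on $G\times X$. Form the fibrewise averages
$$\lambda_i(\fhi)(x):=\int_G\lambda_0(g\inv\fhi)(g\inv x)\,dm_i(x)(g),$$
i.e.\ the $m_i(x)$-convex combination of the translates $(g\cdot\lambda_0)$. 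A direct change-of-variables computation, using the approximate equivariance of $m_i$, shows that $(h\cdot\lambda_i)(\fhi)(x)-\lambda_i(\fhi)(x)\to 0$ for every $h\in G$, $\fhi\in L^\infty(X)$, and $x\in X$.

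Second, view the set $\Lambda$ of strong linear liftings inside $\mathcal{B}(L^\infty(X),\ell^\infty(X))$, equipped with the pointwise weak-$*$ topology, in which the unit ball is compact; $\Lambda$ is convex and closed under pointwise limits, hence itself compact. Take a cluster point $\lambda$ of $(\lambda_i)$ along an ultrafilter on the index set: by the preceding paragraph, $\lambda$ is $G$-equivariant; by closure of $\Lambda$, it remains a strong linear lifting, and it is the desired object.

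The main obstacle is the Sierpi\'nski--Talagrand phenomenon already flagged in the introduction: the integrand $\lambda_0(g\inv\fhi)(g\inv x)$ need not be jointly measurable in $(g,x)$, threatening the very measurability in $x$ of $\lambda_i(\fhi)$. This is precisely what the $C^1$-continuity hypothesis is designed to bypass. Choosing $m_i$ absolutely continuous with respect to Haar measure with a continuous, uniformly compactly supported density $\ro_i(x,g)$, and exploiting the continuity of the Jacobian $\Jac(g,x)$, one can reorganise $\lambda_i(\fhi)(x)$ as the evaluation of $\fhi$ against a continuous, compactly supported kernel on $X$; its output lies in continuous functions and is therefore preserved verbatim by the strong lifting $\lambda_0$, so that measurability in $x$ --- indeed continuity --- becomes automatic by Fubini in local coordinates.
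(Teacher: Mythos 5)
Your overall strategy --- take a classical strong linear lifting $\lambda_0$ and average its $G$-translates against the amenability kernels $m_i$ --- is exactly the approach the theorem's hypotheses are designed to circumvent, and the final paragraph does not rescue it. The integrand $\lambda_0(g\inv\fhi)(g\inv x)$ need not be jointly measurable in $(g,x)$ (Talagrand), so $\lambda_i(\fhi)$ need not be measurable in $x$; worse, even where it is defined, there is no reason it represents $\fhi$: for each fixed $g$ the translate $(g\cdot\lambda_0)(\fhi)$ equals a fixed representative $f$ off a null set $N_g$, but $m_i(x)$ is a diffuse measure on $G$ and without joint measurability one cannot apply Fubini to conclude that for a.e.\ $x$ the equality holds for $m_i(x)$-a.e.\ $g$. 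Your proposed fix is self-defeating: if $\lambda_i(\fhi)(x)$ really could be rewritten as $\int_X\fhi(y)K_i(x,y)\,dy$ for a continuous kernel, then $\lambda_i$ would be a smoothing operator and could not satisfy $\lambda_i(\fhi)\in\fhi$ for all $\fhi$; and the change of variables from a $G$-integral to an $X$-integral is not available anyway, since $g\mapsto g\inv x$ maps onto an orbit, not onto an open set of $X$. A second, independent gap is the compactness step: the set of liftings is \emph{not} closed under pointwise ultrafilter limits over an uncountable net, because the exceptional null sets $N_{i}$ of the $\lambda_i(\fhi)$ can accumulate to a non-null (or non-measurable) set; one needs a single co-null set on which all $\lambda_i(\fhi)$ simultaneously agree with a fixed representative.

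The paper's proof repairs both defects by never invoking a pre-existing lifting. It defines
$(\lambda_i\fhi)(x)=\Lim_{n,\sU}\int_G\fint_{B(g\inv x,1/n)}f(gy)\,\mu_i(g,x)\,dy\,dg$:
for each fixed $n$ the inner ball-average is jointly continuous in $(g,x)$ and depends only on the class of $f$, so measurability is free; and the Lebesgue differentiation theorem, applied at every Lebesgue point $x$ of $f$, gives $(\lambda_i\fhi)(x)=f(x)$ for \emph{all} $i$ simultaneously --- a single co-null set that survives the ultralimit over $i$. The $C^1$-continuity is used there to show that $gB(g\inv x,1/n)$ has bounded eccentricity with respect to balls at $x$ and that the Jacobian correction tends to $1$ uniformly on compacta, not to produce a continuous kernel. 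If you want to salvage your averaging idea, you must interleave the shrinking-ball differentiation \emph{inside} the $G$-average, which is precisely what the paper does.
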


By definition, we say that a continuous action of a topological group $G$ on a differentiable manifold~$X$ is \textbf{$\boldsymbol C^{\boldsymbol{1}}$-continuous} if each $g\in G$ has a derivative at each $x\in X$ and the derivative depends continuously on $(g, x)$ in $G\times X$.

\medskip
Amenability is still not the optimal assumption, see Section~\ref{sec:rems-p}(a). Whether smoothness is required is unclear, see Section~\ref{sec:rems-p}(b).

\begin{rem}\label{rem:cocycles}
All assumptions of Theorem~\ref{thm:main} are preserved if we replace $X$ by a power $X^p$ with $p\in\NN$; therefore, there is a family of liftings on $L^\infty(X^p)$. We shall show in Section~\ref{sec:rems} that this family can be chosen in such a way that it moreover intertwines the homogeneous coboundary maps $L^\infty(X^{p})\to L^\infty(X^{p+1})$ and $\mes(X^{p})\to \mes(X^{p+1})$ and is compatible with permuting coordinates. This implies that bounded cohomology classes can be represented by $\mes$-cocycles, see Corollary~\ref{cor:coh} below.
\end{rem}

\begin{rem}\label{rem:euler}
If we drop the amenability assumption from Theorem~\ref{thm:main}, then \emph{we do not expect the existence of an equivariant linear lifting} in general. In the very explicit example of $\GL_m^+(\RR)$ acting on the projective space (which is non-amenable iff $m\geq 3$), a family of liftings as in Remark~\ref{rem:cocycles} cannot exist when $m$ is even with $m\geq 4$. The argument given in Section~\ref{sec:rems} is very similar to an observation in~\cite{Bucher-Monod}.
\end{rem}

\subsection{The case of semi-simple Lie groups}\label{sec:thm:Lie}
Theorem~\ref{thm:main} is ``soft'' in the sense that it is general but its very simple proof gives limited insight. In concrete situations, such as the M\"obius action of $\SL_2(\RR)$ on $\RR\cup\{\infty\}$, much more structure is available. We shall study more carefully the setting of Lie groups because one of our motivations for this paper is the lifting of cocycles representing characteristic classes. In general this amounts to lifting $L^\infty$-cocycles on a Furstenberg boundary, see e.g.~\cite{Bucher-Monod}, \cite{Hartnick-Ott_arxI}. Additional motivation to find such liftings of cocycles is provided by~\cite{Burger-Iozzi_boundary}.

\bigskip
Let $G$ be a connected (or almost connected) semi-simple Lie group. We refer to~\cite{Helgason01} and~\cite{Knapp02} for basic facts and the notation below. Choose an Iwasawa decomposition $G=KAN$ and let $M$ be the centraliser of $A$ in $K$. The \textbf{Furstenberg boundary} of $G$ can be defined as the homogeneous space $B=G/MAN\cong K/M$. We assume that $G$ has finite centre; this does not restrict the generality since the centre acts trivially on $B$ anyway. We assume that $G$ is non-compact since otherwise $B$ is trivial.

Let $X=G/K$ be the symmetric space and recall that any point $\xi\in B$ can be considered as a Weyl chamber at infinity, thus providing a family of directions towards infinity in $X$.

\medskip
We shall now construct an explicit family of \emph{truncated non-tangential domains} $V_t^\xi \se X$ with $t\geq 0$. Each $V_t^\xi$ should be thought of as a compact prism based at the origin $o=eK\in X$ and pointing towards $\xi$, extending further and further towards $\xi$ as $t\to\infty$.

\smallskip
To this end, we choose once and for all a compact set $D\se N$ with non-empty interior. We can, and do, choose $D$ to be $M$-invariant since $M$ is compact and normalises $N$. Let $\log\colon A\to\mathfrak a$ be the inverse of the exponential map, where $\mathfrak a$ is the Lie algebra of $A$. We denote by $A^+\se A$ (resp.\ $\mathfrak a^+\se \mathfrak a$) the positive Weyl chamber and by $\overline{A^+}$ its closure in $A$. Let $\ro\colon \mathfrak a\to\RR$ be the Weyl vector, i.e.\ the half-sum of positive restricted roots. We define
$$V_t^\xi \ =\ k \, A_t \,D \,o, \ \text{ where }  A_t=\big\{a\in \overline{A^+} : \ro(\log a) \leq t\big\}$$
and where $kM=\xi$ in $B\cong K/M$; notice that $V_t^\xi$ does not depend on the choice of $k\in K$ modulo $M$.

We denote by $\fint dx$ normalised integrals on $X$ with respect to the measure induced by a Haar measure on $G$. Equivalently, this is the Lebesgue measure on $X$ for the Riemannian structure induced by the Killing form of $\mathfrak g$ if we choose the right normalisation of the Haar measure. Given an integrable function class $\fhi$ on $B$, we denote by $P\fhi$ its Poisson transform (p.~100 in~\cite{Helgason08} or p.~279 in~\cite{Helgason00}), which is a harmonic function on $X$. Finally, we choose a non-principal ultrafiltre $\sU$ on $\NN$ and denote by $\Lim_{n,\sU}$ the corresponding ultralimits over $n\in\NN$.

\begin{thmintro}\label{thm:Lie}
The expression
\begin{equation}\label{eq:Lie}
(\lambda \fhi) (\xi)\ =\ \Lim_{n,\sU} \fint_{V_n^\xi} (P\fhi) (x) \,dx
\end{equation}
defines a $G$-equivariant strong linear lifting $\lambda\colon L^\infty(B)\to \sL^\infty(B)$.
\end{thmintro}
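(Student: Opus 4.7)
The algebraic properties of $\lambda$ are immediate. Linearity, positivity, the normalisation $\lambda[\one_B]=\one_B$, and the norm bound $\|\lambda\fhi\|_\infty\le\|\fhi\|_\infty$ all follow from the corresponding properties of the Poisson transform $P$ (linear, positive, $L^\infty$-contractive, with $P\one=\one$ since the Poisson kernel is a probability density), which are preserved by the normalised averages $\fint$ and the ultralimit $\Lim_{n,\sU}$. So \eqref{eq:Lie} unconditionally defines a map $\lambda\colon L^\infty(B)\to\mes(B)$ satisfying every part of the definition of a strong linear lifting \emph{except} possibly $\lambda[\fhi]\in\fhi$ and strength, which are the analytic content.

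The lifting and strength properties are the content of an unrestricted Fatou-type theorem for Poisson integrals on $X=G/K$: for $\fhi\in L^\infty(B)$, the averages $\fint_{V_n^\xi}(P\fhi)(x)\,dx$ converge to $\fhi(\xi)$ for almost every $\xi\in B$, with pointwise convergence everywhere when $\fhi$ is continuous. Geometrically, the truncated non-tangential prisms $V_n^\xi$ form admissible approach regions to $\xi$: in $KAN$-coordinates the Riemannian volume element acquires the Jacobian factor $e^{2\ro(\log a)}$, so the mass of $V_n^\xi$ concentrates in its deepest portion, which lies arbitrarily close to $\xi$ in the geometric boundary; averaging a bounded harmonic function against this exponentially-weighted measure thus extracts the boundary value. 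Because any ordinary limit is respected by the ultralimit, this gives both that $\lambda$ is a linear lifting and that it is strong.

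For $G$-equivariance, the Poisson transform intertwines the $G$-actions, $P(g\fhi)(x)=(P\fhi)(g\inv x)$, and $dx$ is $G$-invariant on $X$. A change of variables yields
\[
\fint_{V_n^\xi}(P(g\fhi))(x)\,dx=\fint_{g\inv V_n^\xi}(P\fhi)(y)\,dy,
\]
so equivariance reduces to
\[
\Lim_{n,\sU}\left[\fint_{g\inv V_n^\xi}(P\fhi)(y)\,dy-\fint_{V_n^{g\inv\xi}}(P\fhi)(y)\,dy\right]=0.
\]
Using Iwasawa, write $g\inv k=k'a'n'$ where $kM=\xi$, so that $g\inv\xi=k'M$, $g\inv V_n^\xi=k'a'n'A_nDo$, and $V_n^{g\inv\xi}=k'A_nDo$. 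The comparison reduces to that of $a'n'A_nD$ with $A_nD$, controlled by two corrections: (i) translation by $a'$ shifts $A_n$ in $\overline{A^+}$ by a bounded amount, contributing only a boundary term in the weighted integral which is relatively vanishing against the exponential growth $e^{2\ro(\log a)}$; (ii) for $a\in A_n$ deep in the Weyl chamber, $a\inv n' a$ lies in an arbitrarily small neighbourhood of $e\in N$ because $\mathrm{Ad}(a\inv)$ contracts each positive root space, so $n'aD=a(a\inv n'a)D\se aD'$ for a slightly enlarged compact $D'\se N$, again a relatively vanishing perturbation. Since $P\fhi$ is bounded, the difference of averages tends to zero.

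The main obstacle is this last equivariance step: the ultralimit over exponentially-weighted expanding prisms is precisely designed to absorb the $G$-action, but making the volume comparison of $g\inv V_n^\xi$ and $V_n^{g\inv\xi}$ quantitative requires a careful accounting in $KAN$-coordinates of both the $A$-shift and the $N$-contraction. The unrestricted Fatou convergence underlying step~2 is itself a non-trivial input to be cited from the theory of boundary behaviour of Poisson integrals on higher-rank symmetric spaces.
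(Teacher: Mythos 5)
Your overall strategy coincides with the paper's: equivariance via a volume comparison of the prisms in Iwasawa coordinates, and the lifting property via an unrestricted Fatou theorem for Poisson integrals. The equivariance step is essentially the paper's argument: there one writes $p(g,\xi)=k_{g\xi}\inv g k_\xi$, notes that it ranges in $MAN$ and in a compact set, and proves $|aU_t\cap U_t|/|U_t|\to 1$ and $|uU_t\cap U_t|/|U_t|\to 1$ uniformly on compacta of $A$ and $N$ respectively --- the $N$-statement by exactly your contraction $a\inv u a\to e$, but restricted to the portion $U_t\cap b\,A^+Do$ which is shown to have asymptotically full relative measure. Your sketch is correct in outline; you should, however, make explicit that the ``shallow'' part of $A_n$, where $a\inv n'a$ is not yet small, must be discarded and why its relative measure vanishes.

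The genuine gap is in the lifting step. First, the mechanism you invoke is off: with the paper's normalisation $dg=da\,dn\,dk$ (order $ANK$) the measure of $U_t=A_tDo$ is the plain product $|A_t|\cdot|D|$ with $|A_t|=c_1t^r$; there is no $e^{2\ro(\log a)}$ weighting in this parametrisation, and in any case averaging a bounded harmonic function over a region whose mass sits ``near $\xi$'' does not by itself recover the boundary value --- that is precisely what a Fatou theorem must supply, and only along \emph{admissible} approach regions. Second, and more importantly, the raw prisms $V_n^\xi$ are \emph{not} contained in any admissible approach region to $\xi$: they are based at the origin and contain points $k_\xi a d o$ with $\log a$ arbitrarily close to the walls of $\mathfrak a^+$, which do not converge admissibly to $\xi$. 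So even quoting the unrestricted Fatou theorem does not directly give convergence of the averages over $V_n^\xi$. The missing reduction, which is the actual content of the paper's proof, is to fix a regular $\alpha_0\in\mathfrak a^+$ and a function $\tau$ with $\tau(t)\to\infty$ and $\tau(t)/t\to 0$, pass to the modified domains $\widetilde V_n^\xi=k_\xi\big(\exp(\tau(n)\alpha_0)U_n\cap U_n\big)$, and check two things: (i) $|\widetilde V_n^\xi|/|V_n^\xi|\to1$, a refinement of the $A$-invariance lemma, so that replacing $V_n^\xi$ by $\widetilde V_n^\xi$ does not change the ultralimit; and (ii) $\widetilde V_n^\xi\se\mathscr A^T_{Do}(\xi)$ once $\tau(n)\alpha_0-T\in\mathfrak a^+$, because $\alpha_0$ is regular. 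Only then does the Knapp--Williamson theorem yield $(P\fhi)(x)\to f(\xi)$ uniformly on $\widetilde V_n^\xi$ for a.e.\ $\xi$ (and at continuity points, whence strength). Your closing paragraph correctly flags the unrestricted Fatou convergence as the input to be cited, but the reduction of \eqref{eq:Lie} to that citation is where the work lies, and it is absent.
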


The interest of this explicit formula is that it can allow exact computations even at points that are not Lebesgue points. Indeed, if $\phi$ has some symmetries, it can happen that the integrals in~\eqref{eq:Lie} converge as $n\to \infty$. This is especially true if $\fhi$ is a simple configuration invariant, as is precisely the case when one considers $L^\infty$-cocycles on Furstenberg boundaries.

\medskip
Here is the most elementary example. Consider $G=\SL_2(\RR)$ acting on $B=\RR\cup\{\infty\}$ by M\"obius transformations. We consider the function class $\fhi$ given by the sign of $\xi \in \RR$, undefined at $0$ and $\infty$. The latter are two points where a general lifting could take unpredictable values.

In the present case, the above machinery boils down to the following classical situation: $X$ is the upper half plane in $\CC$ and the Poisson transform is easily computed to be $P\fhi (x) = 1-2\arg(x) /\pi$. Take $A^+$ to consist of the diagonal matrices $\left(\begin{smallmatrix}a&0\\ 0&1/a\end{smallmatrix}\right)$ with $a>1$; then $N$ is the group of matrices $\left(\begin{smallmatrix}1&u\\ 0&1\end{smallmatrix}\right)$. We define $D$ by $|u|\leq 1$. Then $V_t^0$ and $V_t^\infty$ are both horizontal slices of the cone $|\mathrm{Re}(x)| \leq \mathrm{Im}(x)$. Therefore, by sagittal symmetry we find $(\lambda \fhi) (\xi)=0$ for both $\xi=0, \infty$. Such symmetries are particularly useful for cocycles, compare Remark~\ref{rem:family-Lie} below.

Observe that the choice of $D$ affects the outcome; for instance, defining $D$ by $0\leq u\leq 1$ yields the value $\pi/4 - \log\sqrt 2$ instead of $0$.

\subsection{Notes on (ir)regularity}
Any \emph{individual} measurable function class admits a Borel representative, as follows e.g.\ from Lusin's theorem. It is therefore tempting to require that a lifting yield such Borel representatives. In the few cases where explicit representatives are known for $L^\infty$-cocycles representing characteristic classes on flag manifolds, these representatives are indeed Borel. In fact they loiter on the lower rungs of the hierarchies of Hausdorff--Young (\cite[IX]{Hausdorff14},\cite{Young13}) or Baire~\cite{Baire05}.

\smallskip
In the setting considered by von Neumann, it is possible to obtain a Borel lifting under the continuum hypothesis, see p.~371--372 in~\cite{vonNeumann-Stone}. Such a Borel lifting cannot, however, be equivariant under translations. This fact was discovered in~\cite{Johnson80}, extended to abelian groups in~\cite{Talagrand82}, to general locally compact groups in~\cite{Kupka-Prikry83} and even beyond~\cite{Burke2007}.

How about the (non-equivariant) von Neumann--Stone lifting \emph{without} the continuum hypothesis? Even assuming a strong negation of CH such as $2^{\aleph_0}=\aleph_2$, both the non-existence and the existence of Borel liftings are consistent with ZFC (provided ZFC is consistent). See~\cite{Shelah83} for non-existence and~\cite{Carlson-Frankiewicz-Zbierski} for existence.

\medskip
Finally, a few comments on the condition that a lifting be \emph{strong}. This condition is notably of importance for the disintegration of measures~\cite{IonescuTulcea64}. It can be satisfied e.g.\ when the topology of $X$ has a countable base~\cite{IonescuTulcea}. A slightly weaker conclusion holds for a base of cardinality $\aleph_1$~\cite{Bichteler73}, but there exists a space with a base of cardinality $\aleph_2$ that does not admit a strong lifting~\cite{Losert79}.

\section{Proof of Theorem~\ref{thm:main}}\label{sec:main}
\begin{flushright}
\begin{minipage}[t]{0.75\linewidth}\itshape\small
This new integral of Lebesgue is proving itself a wonderful tool. I might compare it to a modern Krupp gun, so easily does it penetrate barriers which before were impregnable.
\begin{flushright}
\upshape\small
\cite{VanVleck}, page~7
\end{flushright}
\end{minipage}
\end{flushright}
\vspace{1mm}
Choose a left Haar measure on $G$ and write $\int dg$ for the corresponding integrals. The amenability of the $G$-action on $X$ means that there is a net (indexed by some directed set $I$) of non-negative continuous functions $\mu_i$ on $G\times X$ such that

\begin{enumerate}[label=(\alph*)]
\item $\int_G \mu_i(g,x)\,dg =1$ for all $i\in X$ and all $x\in X$,\label{pt:defamen:norm}
\item $\lim_{i\in I} \int_G |\mu_i(sg, sx)-\mu_n(g,x)|\,dg = 0$ uniformly for $(s,x)$ in compact subsets of $G\times X$,\label{pt:defamen:asy}
\end{enumerate}
%
see Proposition~2.2~\cite{Anantharaman02}.

\medskip
We choose a Riemannian metric on $X$ and denote by $B(z,r)$ the corresponding ball of radius $r>0$ around any $z\in X$. We denote by $\int dy$ and $\fint dy$ the integrals, respectively normalised integrals with respect to the associated Lebesgue measure on $X$. Furthermore, we choose an ultrafiltre $\sI$ on $I$ dominating the order filtre, a non-principal ultrafiltre $\sU$ on $\NN$ and denote by $\Lim_{i,\sI}, \Lim_{n,\sU}$ the corresponding ultralimits. We contend that the desired lifting for a function class $\fhi=[f]$ will be provided by the expression
\begin{equation}\label{eq:lift-amen:1}
(\lambda \fhi)(x)\ =\ \Lim_{i,\sI} (\lambda_i \fhi)(x),
\end{equation}
wherein
\begin{equation}\label{eq:lift-amen:2}
(\lambda_i \fhi)(x)\ =\ \Lim_{n,\sU} \int\limits_G \fint\limits_{B(g\inv x, 1/n)} f(gy) \mu_i(g,x)  \,dy\, dg.
\end{equation}
Regarding well-posedness, we notice first that the function
\begin{equation*}\label{eq:well-posed}
(g,x) \longmapsto  \fint\limits_{B(g\inv x, 1/n)} f(gy) \,dy
\end{equation*}
depends on the class $\fhi$ of $f$ rather than on $f$, is bounded by the sup-norm $\|\fhi\|_\infty$ and is continuous on $G\times X$. Indeed, the map $X\to L^1(X)$ sending $z\in X$ to the characteristic function of $B(z, 1/n)$ is norm-continuous. In particular, the integral over $G$ and the ultralimit over $n$ both make sense in~\eqref{eq:lift-amen:2}. Next, observe that $\lambda_i$ is linear in $\fhi$, positive, and sends $[\one_X]$ to $\one_X$. We claim that $\lambda_i(\fhi)$ represents $\fhi$; this implies notably the measurability of $\lambda_i(\fhi)$. In fact we claim more: let $x$ be any Lebesgue point of $f$; we shall show $(\lambda_i \fhi)(x) = f(x)$ for all $i\in I$. The definition~\eqref{eq:lift-amen:1} then implies $(\lambda \fhi)(x) = f(x)$ and hence $\lambda$ is a lifting.

\medskip

To prove the claim, fix first $g\in G$. Let $R_n>0$ (respectively $r_n>0$) smallest (resp.\ largest) radius such that
\begin{equation}\label{eq:quasiball}
B(x, r_n) \ \se\ g B(g\inv x, 1/n)\ \se\ B(x,R_n).
\end{equation}
Since $g$ is $C^1$ at $x$, the sequence $R_n/r_n$ is bounded (depending on $g,x$). In other words, the sets in~\eqref{eq:quasiball} have bounded excentricity with respect to balls around $x$. Thus Lebesgue differentiation holds at $x$ for any $g\in G$ in the sense that
$$\lim_{n\to\infty }\fint\limits_{gB(g\inv x, 1/n)} f(y) \,dy =f(x),$$
see e.g.\ Theorem~7.10 in~\cite{RudinRCA} (or~\cite{dePossel36}). In order to deduce the claim, it remains only to show that the difference
\begin{equation}\label{eq:Jaco}
\fint\limits_{gB(g\inv x, 1/n)} f(y) \,dy  \ - \fint\limits_{B(g\inv x, 1/n)} f(gy) \,dy
\end{equation}
converges to zero uniformly over $g$ in compact subsets of $G$. Indeed, the uniformity will ensure that the convergence survives after integrating against the probability density $\mu_i(g,x) dg$. Denoting by $\Vol(\cdot)$ the normalising factor in $\fint$, the change of variables formula turns~\eqref{eq:Jaco} into
$$\fint\limits_{B(g\inv x, 1/n)} f(gy) \left( \frac{\Vol({\scriptstyle B(g\inv x, 1/n)})}{\Vol({\scriptstyle gB(g\inv x, 1/n)})}|\Jac_g(y)| - 1 \right) dy.$$
Since $f$ is bounded, this converges to zero as $n\to\infty$ because of the characterisation of the Jacobian in terms of the volume of images of small balls. The convergence is uniform for $g$ in compact sets because of the $C^1$-continuity.

\medskip
We now verify that $\lambda$ is $G$-equivariant. Fix any $x\in X$ and $s\in G$. Given $i\in I$, the change of variables $g\to s\inv g$ yields for $(s \lambda_i \fhi)(x) = (\lambda_i \fhi) (s\inv x)$ the formula
$$(s \lambda_i \fhi)(x)  = \Lim_{n,\sU} \int\limits_G \fint\limits_{B(g\inv x, 1/n)} f(s\inv gy) \mu_i(s\inv g,s\inv x)  \,dy\, dg.$$
On the other hand,
$$(\lambda_i s \fhi)(x) = \Lim_{n,\sU} \int\limits_G \fint\limits_{B(g\inv x, 1/n)} f(s\inv gy) \mu_i(g,x)  \,dy\, dg.$$
Thus we have
$$\Big| (s \lambda_i \fhi)(x) - (\lambda_i s \fhi)(x) \Big| \leq \|\fhi\|_\infty \int\limits_G  \big|\mu_i(s\inv g,s\inv x) -\mu_i(g,x) \big|  \,dg.$$
This converges to zero by condition~\ref{pt:defamen:asy}. Therefore $\lambda$ is equivariant, finishing the proof.

\section{Proof of Theorem~\ref{thm:Lie}}
We keep the notation introduced for Theorem~\ref{thm:Lie}. Furthermore, we use $dg$, $dn$, $da$ and $dk$ to denote (integration against) a choice of left Haar measures on $G$, $N$, $A$ and $K$. When it causes no confusion, we simply denote by $|\cdot|$ the corresponding measure of measurable subsets. We normalise our choices in such a way that $|K|=1$ and that $dx$ is the projection of $dg$ to $X=G/K$. If we write the Iwasawa decomposition in the order $ANK$, then we can moreover assume $dg=da\,dn\,dk$, see~\cite[I.5.3]{Helgason00}. This now fixes all normalisations since we chose $dx$ to coincide with the Riemannian Lebesgue measure associated to the Killing form of $\mathfrak g$. Finally, we denote by $r=\dim\mathfrak a$ the $\RR$-rank of $G$.

We shall first investigate the sets $U_t\se X$ defined by $U_t= A_t \,D \,o$. We have $|U_t|= |A_t|\cdot|D|$ for the corresponding measures.

\begin{lem}\label{lem:A-inv}
We have
$$\lim_{t\to\infty} \frac{\big| a U_t \cap U_t \big|}{|U_t|} =1$$
uniformly for $a$ over compact subsets of $A$.
\end{lem}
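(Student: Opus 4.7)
The plan is to push the question through the Iwasawa decomposition until it becomes a standard Følner-type volume estimate for translates of a polytope in the Lie algebra $\mathfrak a$.

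First, under the diffeomorphism $AN\stackrel{\sim}{\to}X$, $an\mapsto anK$, the Riemannian measure $dx$ on $X$ corresponds to the product $da\,dn$ (this is precisely what gives the identity $|U_t|=|A_t|\cdot|D|$ recorded just before the lemma). The action of $a\in A$ sends $a'nK$ to $(aa')nK$, which is already in Iwasawa form; so in $AN$-coordinates it is translation on the $A$-factor, with $N$ fixed. Consequently
$$aU_t\cap U_t=(aA_t\cap A_t)\,D,\qquad\text{so}\qquad \frac{|aU_t\cap U_t|}{|U_t|}=\frac{|aA_t\cap A_t|}{|A_t|},$$
and the $N$-factor drops out entirely.

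Second, the logarithm $\log\colon A\to\mathfrak a$ is a diffeomorphism carrying Haar measure to a constant multiple of Lebesgue measure and $A_t$ to the compact convex polytope $P_t=\{H\in\overline{\mathfrak a^+}:\ro(H)\leq t\}$; compactness holds because $\ro$ is strictly positive on $\overline{\mathfrak a^+}\setminus\{0\}$. Setting $\alpha=\log a$, the claim reduces to
$$\lim_{t\to\infty}\frac{|(\alpha+P_t)\cap P_t|}{|P_t|}=1$$
uniformly for $\alpha$ in compact subsets of $\mathfrak a$.

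Third, exploit the homothety $P_t=tP_1$: this gives $|P_t|=t^r|P_1|$ while $\partial P_t=t\,\partial P_1$ has $(r-1)$-dimensional measure $O(t^{r-1})$. Since $(\alpha+P_t)\triangle P_t$ is contained in the $|\alpha|$-tubular neighbourhood of $\partial P_t$, its volume is $\leq C|\alpha|\,t^{r-1}$ with $C$ depending only on $P_1$, and dividing by $|P_t|\asymp t^r$ yields a bound $O(|\alpha|/t)\to 0$ that is uniform for $\alpha$ bounded, hence uniform for $a$ in compact subsets of $A$. The only step requiring genuine care is the reduction of the measures and of the left $A$-action through the Iwasawa decomposition; once that is in place, the remainder is a routine volume estimate for tubular neighbourhoods of the boundary of a polytope, with no real obstacle.
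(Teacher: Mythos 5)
Your proof is correct and follows essentially the same route as the paper: reduce to the statement for $A_t$ by observing that left multiplication by $a$ only translates the $A$-coordinate in the Iwasawa product measure $da\,dn$, then settle the resulting statement by elementary Euclidean geometry of the truncated simplicial cone $P_t=\{H\in\overline{\mathfrak a^+}:\ro(H)\leq t\}$ in $\mathfrak a$. The only (harmless) difference is in the final convexity estimate: the paper exhibits a translated smaller truncated cone $bA_{t-c_3\|\log a\|}$ inside $aA_t\cap A_t$, whereas you bound the symmetric difference $(\alpha+P_t)\triangle P_t$ by the volume $O(|\alpha|\,t^{r-1})$ of a tube around $\partial P_t$; both give the Følner property, and both also yield the refinement, used later for~\eqref{eq:slow}, that the conclusion persists when $\|\log a\|$ grows like $\tau(t)$ with $\tau(t)/t\to0$.
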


\begin{proof}
It suffices to prove the statement with $A_t$ in place of $U_t$. By construction, there is a constant $c_1>0$ such that $|A_t| = c_1 t^r$. Therefore, it suffices to prove the following claim: there are $c_2, c_3 \geq 0$ such that for every $a\in A$ there is $b\in A$ with $\|\log b\| \leq c_2 \|\log a\|$ such that $a A_t \cap A_t$ contains $b A_{t-c_3 \|\log a\|}$ for all $t$. (By convention, $A_t$ is empty for $t<0$.) This claim, however, is an elementary property of simplicial cones in $\RR^r$ (one can take $b$ to be the exponential of a suitable multiple of the barycentric vector in $\mathfrak a^+$ dual to $\ro$).
\end{proof}

\begin{prop}\label{prop:N-inv}
We have
$$\lim_{t\to\infty} \frac{\big| u U_t \cap U_t \big|}{|U_t|} =1$$
uniformly for $u$ over compact subsets of $N$.
\end{prop}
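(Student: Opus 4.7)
The plan is to use the $ANK$-Iwasawa parametrisation of $X$ to reduce the proposition to a translation-continuity statement on $N$. Since $dg=da\,dn\,dk$ and $|K|=1$, the map $(a,n)\mapsto anK$ identifies $A\times N$ with $X$ and pushes $da\,dn$ forward to $dx$. Under this identification $U_t$ corresponds to $A_t\times D$, which already confirms $|U_t|=|A_t|\cdot|D|$. For $u\in N$, $a\in A_t$, $d\in D$ I would rewrite $uad=a\cdot(a^{-1}ua)d$ with $(a^{-1}ua)d\in N$, so in $(a,n)$-coordinates
\[
uU_t=\{(a,n)\ :\ a\in A_t,\ n\in (a^{-1}ua)D\},
\]
and Fubini yields $|uU_t\cap U_t|=\int_{A_t}\bigl|D\cap(a^{-1}ua)D\bigr|\,da$, where $|\cdot|$ on the right is $dn$-measure on $N$.

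Two ingredients then drive the conclusion. First, $L^1$-continuity of left translation on $N$: for any $\ep>0$ there is a neighbourhood $W$ of $e$ in $N$ such that $|D\cap vD|\geq(1-\ep)|D|$ whenever $v\in W$. Second, conjugation by $a^{-1}$ contracts $N$ as $a$ penetrates $A^+$, because $\mathrm{Ad}(a^{-1})$ acts on the positive root space $\mathfrak n_\alpha$ by the factor $e^{-\alpha(\log a)}$; hence $a^{-1}ua=\exp(\mathrm{Ad}(a^{-1})\log u)\to e$ uniformly for $u$ in any fixed compact $C\subset N$ provided $\min_\alpha\alpha(\log a)\to\infty$ along the simple positive roots $\alpha$. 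Given $C$ and $W$, I can therefore fix $s_0>0$ so that $a^{-1}ua\in W$ for every $u\in C$ whenever $\alpha(\log a)\geq s_0$ for all simple $\alpha$.

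The remaining step is to verify that the inner-chamber subset $A_t'=\{a\in A_t:\alpha(\log a)\geq s_0 \text{ for all simple } \alpha\}$ exhausts $A_t$ in measure as $t\to\infty$. This is the same simplex-scaling observation underlying the proof of Lemma~\ref{lem:A-inv}: letting $v\in\overline{\mathfrak a^+}$ be the sum of the basis of $\mathfrak a$ dual to the simple roots (so $\alpha_i(v)=1$ for all $i$), one checks that $A_t'\supseteq \exp(s_0 v)\cdot A_{t-s_0\ro(v)}$, and the formula $|A_s|=c_1 s^r$ from Lemma~\ref{lem:A-inv} then gives $|A_t'|/|A_t|\geq(1-s_0\ro(v)/t)^r\to 1$. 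Combining everything,
\[
\frac{|uU_t\cap U_t|}{|U_t|}\ \geq\ (1-\ep)\,\frac{|A_t'|}{|A_t|},
\]
which exceeds $1-2\ep$ for $t$ large, uniformly over $u\in C$; since $\ep$ was arbitrary and the ratio is trivially bounded by $1$, the proposition follows. The only step that takes any real care is the explicit $(a,n)$-coordinate description of $uU_t$ via the twist $a^{-1}ua$; once that is secured, the proof is just $L^1$-continuity on $N$ together with the Euclidean geometry of the Weyl chamber.
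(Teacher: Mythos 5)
Your argument is correct, and it rests on the same two mechanisms as the paper's proof --- conjugation by elements deep in the positive Weyl chamber contracts $u\in C$ into an arbitrarily small identity neighbourhood of $N$, and the shallow part of $A_t$ becomes negligible in measure as $t\to\infty$ --- but the bookkeeping is genuinely different. The paper argues by set inclusions in $X$: it covers $uU_t\setminus U_t$ by $\big(A_t\,JD\,o\setminus U_t\big)\cup u\big(U_t\setminus b\,A^+D\,o\big)$, controls the first piece by choosing $J$ with $|JD|\leq(1+\varepsilon)|D|$, and controls the second by invoking Lemma~\ref{lem:A-inv} for the single element $b$. You instead exploit the fact that $(a,n)\mapsto anK$ carries $da\,dn$ to $dx$ and exhibits $U_t$ as the product $A_t\times D$, so that Fubini yields the exact identity $|uU_t\cap U_t|=\int_{A_t}\big|D\cap(a^{-1}ua)D\big|\,da$; the estimate then reduces to $L^1$-continuity of translation on $N$ applied fibrewise (the inner estimate $|D\cap vD|\geq(1-\varepsilon)|D|$ replacing the paper's outer estimate on $|JD|$), together with the explicit simplex computation $|A_t'|/|A_t|\geq(1-s_0\varrho(v)/t)^r$ done by hand instead of citing Lemma~\ref{lem:A-inv}. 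The individual steps all check out: the $ANK$ parametrisation is measure-compatible exactly because the paper normalises $dg=da\,dn\,dk$ with $|K|=1$; controlling the simple roots does control all positive restricted roots since these are non-negative integral combinations, so $\mathrm{Ad}(a^{-1})|_{\mathfrak n}$ has norm $O(e^{-s_0})$ on $A_t'$; and the inclusion $A_t'\supseteq\exp(s_0v)A_{t-s_0\varrho(v)}$ is the same scaling observation as in the proof of Lemma~\ref{lem:A-inv}. What your route buys is an exact formula for the intersection measure and a slightly more self-contained estimate; what the paper's route buys is brevity, by reusing Lemma~\ref{lem:A-inv} as a black box.
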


\begin{proof}
Fix an arbitrary compact subset $C\se N$. By definition of the terms of the Iwasawa decomposition, $A^+$ contracts $N$ to the identity uniformly in the following precise sense. Given an identity neighbourhood $J$ in $N$ there exists $b\in A^+$ such that for all $a\in b A^+$ and all $u\in C$ we have $a\inv u a\in J$.

Let now $\epsilon>0$. Since $D$ is compact, there is an identity neighbourhood $J\se N$ such that $|JD|\leq (1+\epsilon)|D|$. Choose $b$ as above; by Lemma~\ref{lem:A-inv}, we have
\begin{equation}\label{eq:UA}
\big|U_t \cap b \,A^+ D\,o \big| \geq (1- \epsilon) |U_t|
\end{equation}
for all $t$ large enough. But whenever $x$ is in $U_t \cap b \,A^+ D\,o$ and $u\in C$, we have $ux\in A_t \,JD \,o$. Indeed, writing $x= a d o$ for $a\in A_t\cap b \,A^+$ and $d\in D$ yields $ux = a (a\inv u a)do$. Thus we deduce
$$u U_t \setminus U_t \ \se\ \big(A_t \,JD \,o \setminus U_t\big)  \cup u\big(U_t \setminus b \,A^+ D\,o \big),$$
and hence
$$\big|u U_t \setminus U_t \big| \leq \big| A_t \,JD \,o \setminus U_t \big| + \big|U_t \setminus b \,A^+ D\,o \big|.$$
The first term is less than $\epsilon |U_t|$ by the choice of $J$. For $t$ large enough independently of $u\in C$, the second terms is also less than $\epsilon |U_t|$, by~\eqref{eq:UA}. Hence, $|u U_t \cap U_t |\geq (1-2\epsilon) |U_t|$ and the proposition follows.
\end{proof}

We can now show that the sets $V^\xi_t$ of Theorem~\ref{thm:Lie} provide an explicit witness for the (well-known) amenability of the $G$-action on $B$, as follows.

\begin{prop}\label{prop:G-inv}
We have
$$\lim_{t\to\infty} \,\sup_{\xi\in B} \,\frac{\big |g V^\xi_t\cap V^{g\xi}_t \big|}{|V^\xi_t|} =1$$
uniformly for $g$ over compact subsets of $G$.
\end{prop}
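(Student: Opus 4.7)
The plan is to use the Iwasawa decomposition to reduce the problem to comparing $U_t$ with $an\,U_t$ for $(a,n)$ ranging over a compact subset of $A\times N$, at which point Lemma~\ref{lem:A-inv} and Proposition~\ref{prop:N-inv} finish the job.

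First I would fix $\xi=kM\in B$ with $k\in K$ and apply the Iwasawa decomposition $G=KAN$ to write $gk=k'an$ uniquely, with $k'\in K$, $a\in A$, $n\in N$. Since $an\in MAN$, one has $gkMAN=k'MAN$, hence $g\xi=k'M$ under $B\cong K/M$. Consequently $V_t^{g\xi}=k'U_t$, while $gV_t^\xi=k'an\,U_t$. Because $G$ is unimodular, the measure $dx$ on $X$ is $G$-invariant, so translation by $k'$ preserves measure and
$$\frac{|gV_t^\xi\cap V_t^{g\xi}|}{|V_t^\xi|}=\frac{|an\,U_t\cap U_t|}{|U_t|}.$$
As $(g,k)$ ranges over $C\times K$ for $C\subseteq G$ compact, the product $gk$ lies in the compact set $CK$, so by continuity of the Iwasawa projections the pair $(a,n)$ ranges over a compact subset $C_A\times C_N\subseteq A\times N$. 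In particular, uniformity in $\xi\in B$ together with uniformity in $g\in C$ is packaged into uniformity over the single compact set $C_A\times C_N$.

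Next I would bound the ratio via the triangle inequality for symmetric differences. Since $|an\,U_t|=|U_t|$, the ratio equals $1-\tfrac{1}{2}|an\,U_t\mathbin{\triangle}U_t|/|U_t|$, and
$$|an\,U_t\mathbin{\triangle}U_t|\le|an\,U_t\mathbin{\triangle}a\,U_t|+|a\,U_t\mathbin{\triangle}U_t|=|n\,U_t\mathbin{\triangle}U_t|+|a\,U_t\mathbin{\triangle}U_t|,$$
using $G$-invariance of $dx$ under $a$ in the last equality. Each summand is $o(|U_t|)$ as $t\to\infty$, uniformly over $C_N$ or $C_A$, by Proposition~\ref{prop:N-inv} and Lemma~\ref{lem:A-inv} respectively. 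This yields the claimed uniform convergence.

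No substantive obstacle remains, since the real content already lies in the preceding Lemma and Proposition. The only care required here is in interpreting the Iwasawa decomposition to identify $g\xi=k'M$ and $V_t^{g\xi}=k'U_t$, and in noting that the measure on $X$ is $G$-invariant so that both the $k'$- and $(a,n)$-translations are measure-preserving.
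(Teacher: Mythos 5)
Your proof is correct and follows essentially the same route as the paper: both reduce $\bigl|gV^\xi_t\cap V^{g\xi}_t\bigr|$ to $\bigl|hU_t\cap U_t\bigr|$ for $h$ ranging over a compact subset of the minimal parabolic (the paper via a section $\xi\mapsto k_\xi$ and the map $p(g,\xi)=k_{g\xi}\inv g k_\xi\in NAM$, you via the Iwasawa decomposition $gk=k'an$, which also sidesteps the $M$-invariance remark), and then invoke Lemma~\ref{lem:A-inv} and Proposition~\ref{prop:N-inv}. Your explicit symmetric-difference triangle inequality just fills in the combination step that the paper leaves implicit.
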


\begin{proof}
We choose a map $B\to K$, $\xi\mapsto k_\xi$ such that $k_\xi M$ represents $\xi$ in $K/M$. We then define $p\colon G\times B\to G$ by $p(g,\xi) = k_{g\xi}\inv g k_{\xi}$ and observe that it ranges in a compact set whenever $g$ is restricted to a compact set. Moreover, $p(g,\xi)$ fixes the point of $B$ representing the trivial coset; in other words, $p$ ranges in $NAM$. In view of $V^\xi_t = k_\xi U_t$, we have
$$\big |g V^\xi_t\cap V^{g\xi}_t \big|\ =\ \big |p(g,\xi) U_t\cap U_t \big|.$$
Now the conclusion follows from Lemma~\ref{lem:A-inv} and Proposition~\ref{prop:N-inv}, recalling that $U_t$ is $M$-invariant.
\end{proof}

In order to conclude the proof of Theorem~\ref{thm:Lie}, it now suffices to show that the expression
$$(\lambda \fhi) (\xi)\ =\ \Lim_{n,\sU} \fint_{V_n^\xi} (P\fhi) (x) \,dx$$
defines a linear lifting $\lambda\colon L^\infty(B)\to \sL^\infty(B)$. Indeed, the $G$-equivariance follows from Proposition~\ref{prop:G-inv} exactly as in the proof of Theorem~\ref{thm:main} thanks to the $G$-equivariance of the Poisson transform $P$. We are going to use the Fatou theorem of Knapp--Williamson, Theorem~4.1 in~\cite{Knapp-Williamson}.

\smallskip
Lemma~\ref{lem:A-inv} implies that if $\tau\colon\RR_+\to\RR_+$ is a function tending sufficiently slowly to infinity, then
\begin{equation}\label{eq:slow}
\lim_{t\to\infty} \frac{\big| \exp(\tau(t) \alpha) U_t \cap U_t \big|}{|U_t|} =1
\end{equation}
holds uniformly for $\alpha$ over compact subsets of $\mathfrak a$. (In fact the proof of the lemma shows that it suffices to have $\tau(t)/t\to0$.) Choose once and for all an element $\alpha_0\in \mathfrak a^+$ and a function $\tau$ as above. Define the modified domains
$$\widetilde{V}_t^\xi = k_\xi \big(\exp(\tau(t) \alpha_0) U_t \cap U_t\big).$$
Then~\eqref{eq:slow} implies
$$\lim_{t\to\infty} \frac{\big| \widetilde{V}_t^\xi \cap V_t^\xi  \big|}{|V_t^\xi|} =1$$
and hence the following modified identity for $\lambda$
\begin{equation}\label{eq:modif:int}
(\lambda \fhi) (\xi)\ =\ \Lim_{n,\sU} \fint_{\widetilde{V}_n^\xi} (P\fhi) (x) \,dx
\end{equation}
holds. Since $\alpha_0$ is in the positive Weyl chamber, any sequence $x_n\in \widetilde{V}_n^\xi$ converges admissibly unrestrictedly to $\xi$ in the sense of Kor{\'a}nyi (Section~4 in~\cite{Koranyi69}). More precisely, the fact that $\alpha_0$ is regular implies that for any $T\in\mathfrak a$, our modified domain $\widetilde{V}_n^\xi$ lies within Kor{\'a}nyi's domain $\mathscr A^T_{D o}(\xi)$ in the notation of~\cite{Koranyi69} page~403 when $\tau(n)$ is large enough relative to $T$. Indeed,
$$\tau(n) \alpha_0 - T \in \mathfrak a^+ \ \Longrightarrow\ k_\xi (\exp(\tau(n) \alpha_0) A^+ Do \ \se\ \mathscr A^T_{D o}(\xi)$$
according to the definition of Kor{\'a}nyi's $\mathscr A^T_{D o}(\xi)$. Therefore, choosing $f\in\fhi$, the Knapp--Williamson Fatou theorem implies that for almost every $\xi\in B$, the Poisson transform $(P\fhi) (x)$ converges to $f(\xi)$ uniformly for $x$ in $\widetilde{V}_n^\xi$ as $n$ goes to infinity. In view of~\eqref{eq:modif:int}, this concludes the proof that $\lambda \fhi$ represents $\fhi$.

\smallskip
The fact that the lifting $\lambda$ is \emph{strong} can be justified as follows. The proof of Theorem~4.1 in~\cite{Knapp-Williamson} relies notably on Lebesgue-type strong differentiation much in the same was as the Fatou theorems of Fatou~\cite{Fatou}, Marcinkiewicz--Zygmund~\cite{Marcinkiewicz-Zygmund39}, etc.\ do, but differentiation has to be performed on the nilpotent group $\teta(N)$ instead of Euclidean differentiation ($\teta$ is the Cartan involution). Nonetheless, this provides convergence at least at every continuity point; compare~\S4 in~\cite{Koranyi69} and~\S4 in~\cite{Knapp-Williamson}.

\section{Proof of the remarks}\label{sec:rems}
Having in mind the proof of Theorem~\ref{thm:main}, we can address Remark~\ref{rem:cocycles}. Keep the notation of the theorem and let $p\in \NN$. We define $\mu_{p,i}$ on $G^p\times X^p$ as the product of $\mu_i$ and observe that it satisfies Properties~\ref{pt:defamen:norm} and~\ref{pt:defamen:asy} of Section~\ref{sec:main} for the $G^p$-action on $X^p$. Given a point $x=(x_1, \ldots, x_p)$ in $X^p$ and $r>0$, we denote by $C(x, r)$ the product of the balls $B(x_i, r)$. We claim that
\begin{equation}\label{eq:lift-amen-p}
(\lambda_p \fhi)(x)\ =\ \Lim_{i, \sI} \Lim_{n,\sU} \int\limits_{G^p} \fint\limits_{C(g\inv x, 1/n)} f(gy) \mu_{p,i}(g,x)  \,dy\, dg
\end{equation}
defines a $G^p$-equivariant strong linear lifting for $\fhi\in L^\infty(X^p)$. Indeed, the sets $C(x,r)$ have bounded eccentricity with respect to the balls in $X^p$ for the product metric. Thus the proof of Theorem~\ref{thm:main} applies unchanged to $\lambda_p$.

By construction $\lambda_p$ intertwines the permutation of coordinates. In order to check that the family $\lambda_p$ intertwines coboundaries, we recall that the homogeneous coboundary operator $d$ is the alternating sum  $\sum_{j=0}^p (-1)^j d_j$, where $d_j$ omits the $j$th coordinate. Thus it suffices to verify the following claim: let $f\in\mes(X^p)$ and define $d_0f\in\mes(x^{p+1})$ by $(d_0 f)(x) = f(x_1, \ldots, x_p)$, where $x=(x_0, x_1, \ldots, x_p)$. Then $\lambda_{p+1} [d_0 f] = d_0 \lambda_p [f]$. This is indeed apparent in the formula~\eqref{eq:lift-amen-p}.

\medskip
Remark~\ref{rem:cocycles} has an immediate application to cohomology. We recall that when considering cocycles given as function classes, a representative satisfying the cocycle equation everywhere is often called a \textbf{strict} cocycle to emphasize that the cocycle equation is not only assumed to hold amost-everywhere.

\begin{cor}\label{cor:coh}
Let $G$ be a locally compact group with a $C^1$-continuous amenable action on a differentiable manifold~$X$.

Then every continuous bounded cohomology class of $G$ (with real coefficients) can be represented isometrically by a $G$-invariant strict cocycle in $\mes(X^{p+1})$, where $p$ is the degree of the class.
\end{cor}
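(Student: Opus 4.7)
The plan is to combine Remark~\ref{rem:cocycles} with the fact that amenable actions yield isometric resolutions for continuous bounded cohomology. By the Burger--Monod theorem on amenable actions, the cohomology of the complex $L^\infty(X^{\bullet+1})^G$ under the homogeneous coboundary computes $H^\bullet_{\mathrm{cb}}(G,\RR)$ isometrically. In particular, any class $\alpha\in H^p_{\mathrm{cb}}(G,\RR)$ admits a cocycle representative $\omega\in L^\infty(X^{p+1})^G$ with $\|\omega\|_\infty=\|\alpha\|$; if a norm-attaining representative is not granted directly, one extracts one from a minimising sequence via weak-$*$ compactness of the unit ball of $L^\infty(X^{p+1})$, noting that the set of $G$-invariant cocycles representing $\alpha$ is weak-$*$ closed and convex.

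Let $\lambda_{p+1}\colon L^\infty(X^{p+1})\to \mes(X^{p+1})$ be the lifting furnished by Remark~\ref{rem:cocycles}, and set $F:=\lambda_{p+1}(\omega)$. The three conclusions required of $F$ are now straightforward. First, $F$ is pointwise $G$-invariant: the remark provides $G^{p+1}$-equivariance of $\lambda_{p+1}$, which specialises to equivariance under the diagonal $G$-action, so the $G$-invariance of the class $\omega$ transfers to pointwise $G$-invariance of the lift $F$. Second, $F$ satisfies the homogeneous cocycle equation everywhere: the remark supplies the intertwining identity $d\circ\lambda_{p+1}=\lambda_{p+2}\circ d$, and $d\omega=0$ in $L^\infty(X^{p+2})$ therefore yields $dF = \lambda_{p+2}(0)=0$ as an actual function on $X^{p+2}$. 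Third, the sup-norm of $F$ equals $\|\alpha\|$: linear liftings do not increase norms (Remark~(a) of the introduction), so $\|F\|_\infty\leq\|\omega\|_\infty=\|\alpha\|$, while conversely, since $F$ represents $\omega$ as a function class, its pointwise supremum is at least the essential supremum $\|\omega\|_\infty$ of that class.

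The only non-mechanical ingredient is the existence of an exactly norm-attaining $L^\infty$-representative of $\alpha$ in the amenable resolution; all remaining steps are bookkeeping in terms of the structural properties of the family $(\lambda_p)$ built in Section~\ref{sec:rems}. The permutation-compatibility of the family, while not strictly required for the statement, ensures as a bonus that $F$ can be taken alternating whenever $\omega$ is.
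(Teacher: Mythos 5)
Your argument is correct and coincides with the paper's own proof: invoke the Burger--Monod isometric realisation of $H^\bullet_{\mathrm{cb}}(G,\RR)$ on the complex $L^\infty(X^{\bullet+1})^G$ (the paper notes explicitly that topological amenability implies Zimmer-amenability, which is what that theorem requires) and then apply the equivariant, coboundary-intertwining, norm-nonincreasing liftings of Remark~\ref{rem:cocycles}. The only soft spot is your aside on norm attainment --- the set of cocycles representing $\alpha$ is a coset of the image of $d$, which need not be weak-$*$ closed, so that sketch does not quite work as stated --- but this point is supplied by the cited realisation theorem itself, exactly as in the paper.
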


We refer to~\cite{Burger-Monod3,Monod} for the context of this result and for continuous bounded cohomology. The particular case where $G$ is a semi-simple Lie group is of special importance because explicit bounded cocycles for characteristic classes lead to numerical invariants in topology. The prime example of this phenomenon is given by Milnor--Wood inequalities~\cite{Milnor58, Wood71, Gromov}. We recall that \emph{all} characteristic classes are conjectured to be bounded~\cite{Dupont, MonodICM}; this is known to be the case for primary classes~\cite{Gromov, BucherKarlsson}.

\begin{proof}[Proof of Corollary~\ref{cor:coh}]
The $G$-action on $X$ is amenable also in the measure-theoretical sense of Zimmer, see~\cite[3.3.8]{Anantharaman-Renault}. Therefore, every cohomology class can be represented isometrically by an $L^\infty$-cocycle (function class) on $X^{p+1}$, see~\cite[Thm.~2]{Burger-Monod3} or~\cite[7.5.3]{Monod}. Now the result follows from Theorem~\ref{thm:main} and Remark~\ref{rem:cocycles}.
\end{proof}

\begin{rem}\label{rem:family-Lie}
The lifting of Theorem~\ref{thm:Lie} for a semi-simple Lie group $G$ also leads to a family of liftings for $G^p$ acting on $B^p$ that satisfy the additional properties of Remark~\ref{rem:cocycles}. Indeed it suffices to make consistent choices for the Weyl chamber etc.\ for $G^p$, taking $(A^+)^p \se A^p$ and the compact set $D^p$ in $N^p$. The resulting lifting is in fact equivariant for the almost connected semi-simple group $\mathrm{Sym}(p) \ltimes G^p$, and thus well-suited to implement Corollary~\ref{cor:coh}.
\end{rem}

Turning to Remark~\ref{rem:euler}, let $m\geq 4$ be an even integer and consider the group $G=\GL_m^+(\RR)$ of matrices with positive determinant acting on the projective space $\PP\RR^m$. In order to justify the remark, it suffices to show that there is a non-null bounded measurable $G$-invariant alternating function $f\colon  (\PP\RR^m)^{m+1}\to\RR$ such that the coboundary $df$ is a null-function, but such that for every $G$-invariant alternating function $f'$ in the class of $f$, the function $df'$ does not vanish everywhere on $(\PP\RR^m)^{m+2}$.

The argument is very similar to~\cite[\S3]{Bucher-Monod}, except that in this reference matrices with negative determinant were used to obtains cancellations; we will have to avoid this trick since we have no sign-equivariance here. In any case, the existence of the non-null $f$ with negligible coboundary $df$ is established therein. Let thus $f'$ be $G$-invariant, alternating and in the class of $f$; we shall verify that $df'$ is not zero. Denote by $e_1, \ldots, e_m\in\PP\RR^m$ the images of the usual basis vectors. Then there are exactly two $G$-orbits of $(m+1)$-tuples $(x, e_1, \ldots, e_m)$ such that $x$ has no zero entries, and $|f|$ is constant on the union of these two orbits, which is co-null (see~\cite[\S3]{Bucher-Monod}). Thus $f'$ cannot vanish at $(e_0, e_1, \ldots, e_m)$, where $e_0 $ is the class of the sum of all basis vectors. We shall prove the remark by showing that $df'$ does not vanish on the $(m+2)$-tuple $(e_0, e_1, \ldots, e_m, e_{1,2})$, where $e_{1,2}$ is the class of the sum of the first two basis vectors. To that end, it suffices to show that $d_jf'$ vanishes there for all $0\leq j\leq m$, since $d_{m+1}f'$ gives $f'(e_0, e_1, \ldots, e_m)$ which is non-zero.

For $j=0$, consider the block-diagonal matrix $g=\left(\begin{smallmatrix} 0&1\\ 1&0\end{smallmatrix}\right)\oplus -1 \oplus\Id_{m-3}$ which is in $G$. It permutes $e_1$ with $e_2$ but fixes $e_{1,2}$ and $e_i$ for all $i\geq 3$; therefore $f'$ vanishes on $(e_1, \ldots, e_m, e_{1,2})$.

For $j=1$, we consider $g=\left(\begin{smallmatrix}1&0&0&0\\ 0&1&0&0\\ 0&0&0&1\\ 2&0&-1&0\end{smallmatrix}\right)\oplus\Id_{m-4}$. It permutes $e_3$ with $e_4$ but fixes all other coordinates of $(e_0, e_2, \ldots, e_m, e_{1,2})$, so that $f'$ vanishes there. A similar matrix works for $j=2$.

For $j=3$, take $g=\left(\begin{smallmatrix}0&1\\ 1&0\end{smallmatrix}\right)\oplus \left(\begin{smallmatrix}1&0\\ 2&-1\end{smallmatrix}\right)\oplus\Id_{m-4}$. It permutes $e_1$ with $e_2$ but fixes all other coordinates of $(e_0, e,1, e_2, e_4, \ldots, e_m, e_{1,2})$, so that $f'$ vanishes there. A similar matrix works for $4\leq j\leq m$, finishing the proof.

\section{Remarks on the proofs}\label{sec:rems-p}
\noindent
(a)~Whilst the amenability of the action seems essential for the proof of Theorem~\ref{thm:main}, it is not a necessary condition. First of all, one could add a non-amenable factor to $G$ acting trivially on $X$. Another trivial case arises when $X$ is discrete. There are however more essential counter-examples, even going back to our basic guiding example of the projective action of $\SL_2(\RR)$ on $\RR\cup\{\infty\}$. We recall that the group $G(\RR)$ of \emph{piecewise} projective homeomorphisms of $\RR\cup\{\infty\}$ and its subgroup $H(\RR)$ of piecewise projective homeomorphisms of $\RR$ are both non-amenable~\cite{Monod_PNAS}. They contain many non-amenable finitely generated subgroups which, due to their self-similar nature, \emph{act non-amenably} on $\RR\cup\{\infty\}$, respectively on $\RR$; in fact their point stabilisers are non-amenable (compare~\cite{Monod_PNAS}). But by its local construction, the lifting of Theorem~\ref{thm:Lie} remains unaffected (compare also~(d) below).

\begin{prop}\label{prop:pw}
The linear lifting of Theorem~\ref{thm:Lie} is $G(\RR)$-equivariant.
\end{prop}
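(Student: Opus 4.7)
The plan is to combine two ingredients: the $\SL_2(\RR)$-equivariance of $\lambda$ already given by Theorem~\ref{thm:Lie}, and a locality property of $\lambda$ extracted from the proof of that theorem. Since any $g\in G(\RR)$ coincides on each of its finitely many pieces with a single M\"obius transformation, the equivariance $\lambda(g\fhi)(g\xi)=(g\lambda\fhi)(g\xi)$ should reduce to the projective case one point $\xi\in B$ at a time.

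A preliminary step is to record a \emph{locality principle}: if $\fhi\in L^\infty(B)$ vanishes on an open neighbourhood of $\xi\in B$, then $(\lambda\fhi)(\xi)=0$. This should be immediate from the continuity-point refinement of the strong lifting property discussed at the end of Section~3: such a $\fhi$ admits a representative that is continuous at $\xi$ with value $0$, so the Knapp--Williamson Fatou theorem forces $\lambda\fhi(\xi)=0$. Equivalently, $(\lambda\fhi)(\xi)$ depends only on the germ of $\fhi$ at $\xi$.

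For the main step, fix $g\in G(\RR)$ and $\xi_0\in B$, and choose an open neighbourhood $U$ of $\xi_0$ meeting no other breakpoint of $g$. Writing $U=U_L\cup\{\xi_0\}\cup U_R$ for its decomposition into the two sides of $\xi_0$, $g$ coincides with some $\gamma_L\in\PGL_2(\RR)$ on $U_L\cup\{\xi_0\}$ and with some $\gamma_R\in\PGL_2(\RR)$ on $\{\xi_0\}\cup U_R$, and continuity of $g$ at $\xi_0$ forces $\gamma_L\xi_0=g\xi_0=\gamma_R\xi_0$. Taking a continuous cutoff $\eta$ supported in $U$ and equal to $1$ near $\xi_0$, I would decompose
$$\fhi \;=\; \fhi_L+\fhi_R+\fhi_\infty,\qquad \fhi_L := \eta\fhi\cdot\one_{U_L\cup\{\xi_0\}},\ \fhi_R := \eta\fhi\cdot\one_{U_R},\ \fhi_\infty := (1-\eta)\fhi.$$
Because $\fhi_L$ is supported where $g$ agrees with $\gamma_L$, a short verification should give $g\fhi_L=\gamma_L\fhi_L$ as classes in $L^\infty(B)$, and similarly $g\fhi_R=\gamma_R\fhi_R$. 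Applying $\lambda$, invoking its $\SL_2(\RR)$-equivariance on each projective piece, evaluating at $g\xi_0=\gamma_L\xi_0=\gamma_R\xi_0$, and using locality to discard the $\fhi_\infty$-terms at both $\xi_0$ and $g\xi_0$, one should arrive at $(\lambda(g\fhi))(g\xi_0)=(\lambda\fhi)(\xi_0)=(g\lambda\fhi)(g\xi_0)$.

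The main obstacle is really the breakpoint case. At an interior point of a piece, $g$ is locally a single M\"obius map and equivariance at that point follows directly from Theorem~\ref{thm:Lie} combined with locality. At a breakpoint $\xi_0$, however, $g$ is a continuous gluing of two distinct projective maps $\gamma_L\ne\gamma_R$, and the left/right cutoff decomposition above is the crucial manoeuvre --- it is made available precisely by the coincidence $\gamma_L\xi_0=\gamma_R\xi_0$ and by the fact that each of $\fhi_L$, $\fhi_R$ has support contained entirely in one ``branch'' of $g$, so that $\SL_2(\RR)$-equivariance can be applied to each piece separately.
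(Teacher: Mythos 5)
Your argument is correct, and it makes explicit a locality principle that the paper only gestures at. The paper's own justification (Section~5, items~(a) and~(d)) runs through the kernel rather than through the function: writing the lifting as convolution against the approximate identity $M_t$ as in~\eqref{eq:noyau}, projective equivariance amounts to $\|M_t-\Jac_g\cdot M_t\circ g\|_{L^1}\to 0$ for $g\in\SL_2(\RR)$, and since $M_t$ concentrates at the base point, the corresponding difference for a piecewise projective $g$ splits along the finitely many pieces into terms each dominated by the projective estimate for a single branch, so the same $L^1$ convergence persists. Your route decomposes $\fhi$ by cutoffs instead of decomposing the kernel, and replaces the $L^1$ estimate by the continuity-point refinement of the strong lifting property; the two are implementations of the same underlying locality, but yours buys independence from the explicit kernel formula --- it applies verbatim to any lifting that is germ-local and projectively equivariant --- while the paper's version is more quantitative and identifies exactly which property of the kernel is being used. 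One point you should spell out in your preliminary step: the sets $V_n^\xi$ themselves contain points near the origin $o$ that do not converge to $\xi$, so the continuity-point convergence of the Poisson transform must be fed through the modified identity~\eqref{eq:modif:int} with the domains $\widetilde{V}_n^\xi$, all of whose points do converge admissibly to $\xi$; since that identity is established in the paper for \emph{every} $\xi$, this is a presentational matter rather than a gap.
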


\begin{question}
Is there a notion of \emph{local} amenability of an action which could be necessary and sufficient for the existence of equivariant linear liftings?
\end{question}

A concrete test-case is the following.

\begin{problem}
Prove that there is no $\SL_3(\RR)$-equivariant lifting on the projective space $\PP\RR^3$.
\end{problem}

\medskip
\noindent
(b)~The proof of Theorem~\ref{thm:main} uses an averaging device \emph{before} completing Lebesgue differentiation. We have observed in the introduction that one cannot, it seems, reverse the order and average a given (non-equivariant) lifting in this way. Nevertheless, it is plausible that our proof can be combined with more general constructions of liftings. This would lead to a positive answer to the following.

\begin{question}\label{qu:amen}
Let $G$ be a locally compact group with a continuous amenable action on a locally compact space $X$. Is there a $G$-equivariant linear lifting on $X$?
\end{question}

\medskip
\noindent
(c)~We refer to~\cite[4.1]{Monod} for the notion of \emph{relatively injective} Banach modules.

\begin{question}
Let $G\acts X$ be an amenable action as in Theorem~\ref{thm:main} or Question~\ref{qu:amen}. Is the Banach $G$-module $\mes(X)$ relatively injective?
\end{question}

Several related function spaces on $X$ have been shown to be relatively injective~\cite[p.~3875]{Monod_exact}, but this always relied on duality; the proof given therein would for $\mes(X)$ run into the measurability issues mentionned in the introduction. As for the quotient $L^\infty(X)$, its relative injectivity actually characterizes the measure-theoretical amenability in the sense of Zimmer, see~\cite[Thm.~2]{Burger-Monod3} or~\cite[7.5.3]{Monod}. This Zimmer-amenability follows from topological amenability~\cite[3.3.8]{Anantharaman-Renault}.

\bigskip
\noindent
(d)~By construction, the linear lifting of Theorem~\ref{thm:Lie} is given by convolution against a rather explicit \emph{approximate identity}: the average over $V^\xi_n$ of the Poisson kernel. For instance, for $\SL_2(\RR)$ and $D$ as in~\S\ref{sec:thm:Lie}, we have
\begin{equation}\label{eq:noyau}
\lambda \fhi (\xi) = \Lim_{n,\sU} \int_{-\infty}^{+\infty} \fhi(x) M_n(x-\xi)\,dx \kern5mm(\fhi\in L^\infty(\RR), \xi\in\RR)
\end{equation}
where $dx$ is the Lebesgue measure and the kernel $M_t$ is given for $t>0$ by
$$M_t(x)=\frac{1}{2\pi t}\int_{e^{-t}}^1 \int_{-1}^1 \frac{du\ dv}{v^2 + (uv-x)^2},$$
which can be re-written with $A(x)= x\arctan(x) -\frac12 \log(1+x^2)$ as
$$M_t(x) = \frac{1}{2\pi t x}\Big(A(1+e^t x) - A(1-e^t x) + A(1-x) - A(1+x) \Big).$$
That~\eqref{eq:noyau} defines a $\SL_2(\RR)$-equivariant linear lifting is due to the fact that $M_t - \Jac_g\cdot M_t\circ g$ tends to zero in $L^1$-norm as $t\to\infty$ for all $g\in \SL_2(\RR)$. This, however, implies the corresponding statement for all $g$ in the \emph{piecewise} $\SL_2(\RR)$ group. Therefore, $\lambda$ is equivariant for this larger group.

\medskip
Since $M_t$ appears as a convolutor in~\eqref{eq:noyau}, its Fourier transform operates more directly. In can be written as exponential integral $\widehat{M_t}(\omega) = \frac1t \int_{e^{-t} |\omega|}^{|\omega|} \frac{e^{-s} \sin s}{s^2} \,ds$
%
%
(in non-unitary angular frequency $\omega$).

\subsection*{Acknowledgements}
I am gratedful to Mikael de la Salle for pointing out an inaccuracy in a previous version of the proof of Theorem~\ref{thm:main}.

\bibliographystyle{../BIB/amsalphaNM}
\bibliography{../BIB/ma_bib}

\def\cprime{$'$}
\providecommand{\bysame}{\leavevmode\hbox to3em{\hrulefill}\thinspace}
\begin{thebibliography}{ADR:2000}

\bibitem[AD:2002]{Anantharaman02}
Claire Anantharaman-Delaroche, \emph{Amenability and exactness for dynamical
  systems and their {$C^\ast$}-algebras}, Trans. Amer. Math. Soc. \textbf{354}
  (2002), no.~10, 4153--4178.

\bibitem[ADR:2000]{Anantharaman-Renault}
Claire Anantharaman-Delaroche and Jean Renault, \emph{Amenable groupoids},
  Monographies de L'Enseignement Math\'ematique, vol.~36, L'Enseignement
  Math\'ematique, Geneva, 2000.

\bibitem[Bai:1905]{Baire05}
Ren{\'e} Baire, \emph{Le{\c{c}}ons sur les fonctions discontinues}, Collection
  de monographies sur la th{\'e}orie des fonctions, Gauthier-Villars, Paris,
  1905, R{\'e}dig{\'e}es par A.\ Denjoy.

\bibitem[BI:2002]{Burger-Iozzi_boundary}
Marc Burger and Alessandra Iozzi, \emph{Boundary maps in bounded cohomology.
  {A}ppendix to: ``{C}ontinuous bounded cohomology and applications to rigidity
  theory'' by {B}urger and {N}. {M}onod}, Geom. Funct. Anal. \textbf{12}
  (2002), no.~2, 281--292.

\bibitem[Bic:1973]{Bichteler73}
Klaus Bichteler, \emph{A weak existence theorem and weak permanence properties
  for strong liftings}, Manuscripta Math. \textbf{8} (1973), 1--10.

\bibitem[BM:2002]{Burger-Monod3}
Marc Burger and Nicolas Monod, \emph{Continuous bounded cohomology and
  applications to rigidity theory}, Geom. Funct. Anal. \textbf{12} (2002),
  no.~2, 219--280.

\bibitem[BM:2012]{Bucher-Monod}
Michelle Bucher and Nicolas Monod, \emph{The norm of the {E}uler class}, Math.
  Ann. \textbf{353} (2012), no.~2, 523--544.

\bibitem[Bou:2004]{BourbakiINT16_anglais}
Nicolas Bourbaki, \emph{Integration. {I}. {C}hapters 1--6}, Elements of
  Mathematics, Springer-Verlag, Berlin, 2004, Translated from the 1959, 1965
  and 1967 French originals by Sterling K. Berberian.

\bibitem[Buc:2004]{BucherKarlsson}
Michelle Bucher, \emph{Characteristic classes and bounded cohomology}, Ph.D.
  thesis, ETHZ Diss. Nr.~15636, 2004.

\bibitem[Bur:2007]{Burke2007}
Maxim~R. Burke, \emph{Invariant {B}orel liftings for category algebras of
  {B}aire groups}, Fund. Math. \textbf{194} (2007), no.~1, 15--44.

\bibitem[CFZ:1994]{Carlson-Frankiewicz-Zbierski}
Timothy Carlson, Ryszard Frankiewicz, and Pawe{\l} Zbierski, \emph{Borel
  liftings of the measure algebra and the failure of the continuum hypothesis},
  Proc. Amer. Math. Soc. \textbf{120} (1994), no.~4, 1247--1250.

\bibitem[Die:1951]{Dieudonne51}
Jean Dieudonn{\'e}, \emph{Sur le th\'eor\`eme de {L}ebesgue-{N}ikodym. {IV}},
  J. Indian Math. Soc. (N.S.) \textbf{15} (1951), 77--86.

\bibitem[dP:1936]{dePossel36}
Ren{\'e} de~Possel, \emph{{Sur la d\'erivation abstraite des fonctions
  d'ensemble}}, {J. Math. Pures Appl. (9)} \textbf{15} (1936), 391--409.

\bibitem[Dup:1979]{Dupont}
Johan~L. Dupont, \emph{Bounds for characteristic numbers of flat bundles},
  Algebraic topology, Aarhus 1978 (Proc. Sympos., Univ. Aarhus, 1978),
  Springer, Berlin, 1979, pp.~109--119.

\bibitem[Fat:1906]{Fatou}
Pierre Fatou, \emph{S{\'e}ries trigonom{\'e}triques et s{\'e}ries de {T}aylor},
  Acta Math. \textbf{30} (1906), no.~1, 335--400.

\bibitem[Gro:1982]{Gromov}
Micha{\"\i}l Gromov, \emph{Volume and bounded cohomology}, Inst. Hautes
  \'Etudes Sci. Publ. Math. (1982), no.~56, 5--99 (1983).

\bibitem[Hau:1914]{Hausdorff14}
Felix Hausdorff, \emph{Grundz{\"u}ge der {M}engenlehre}, von Veit, Leipzig,
  1914.

\bibitem[Hel:2000]{Helgason00}
Sigurdur Helgason, \emph{Groups and geometric analysis}, Mathematical Surveys
  and Monographs, vol.~83, American Mathematical Society, Providence, RI, 2000,
  Integral geometry, invariant differential operators, and spherical functions,
  Corrected reprint of the 1984 original.

\bibitem[Hel:2001]{Helgason01}
Sigurdur Helgason, \emph{Differential geometry, {L}ie groups, and symmetric
  spaces}, Graduate Studies in Mathematics, vol.~34, American Mathematical
  Society, Providence, RI, 2001, Corrected reprint of the 1978 original.

\bibitem[Hel:2008]{Helgason08}
Sigurdur Helgason, \emph{Geometric analysis on symmetric spaces}, second ed.,
  Mathematical Surveys and Monographs, vol.~39, American Mathematical Society,
  Providence, RI, 2008.

\bibitem[HO:2013]{Hartnick-Ott_arxI}
Tobias Hartnick and Andreas Ott, \emph{Bounded cohomology via partial
  differential equations, {I}}, 2013, Preprint, {\tt
  http://arxiv.org/abs/1310.4806v1}.

\bibitem[IT:1965]{IonescuTulcea65}
Alexandra Ionescu~Tulcea, \emph{On the lifting property. {V}}, Ann. Math.
  Statist. \textbf{36} (1965), 819--828.

\bibitem[ITIT:1962]{IonescuTulcea62}
Alexandra Ionescu~Tulcea and Cassius Ionescu~Tulcea, \emph{On the lifting
  property. {II}. {R}epresentation of linear operators on spaces {$L_{E^{r}},$}
  {$1\leq r<\infty $}}, J. Math. Mech \textbf{11} (1962), no.~5, 773--795.

\bibitem[ITIT:1964]{IonescuTulcea64}
Alexandra Ionescu~Tulcea and Cassius Ionescu~Tulcea, \emph{On the lifting
  property. {III}}, Bull. Amer. Math. Soc. \textbf{70} (1964), 193--197.

\bibitem[ITIT:1967]{IonescuTulcea67}
Alexandra Ionescu~Tulcea and Cassius Ionescu~Tulcea, \emph{On the existence of
  a lifting commuting with the left translations of an arbitrary locally
  compact group}, Proc. {F}ifth {B}erkeley {S}ympos. {M}ath. {S}tatist. and
  {P}robability (1965/66), {V}ol. {II}: {C}ontributions to {P}robability
  {T}heory, {P}art 1, Univ. California Press, Berkeley, 1967, pp.~63--97.

\bibitem[ITIT:1969]{IonescuTulcea}
Alexandra Ionescu~Tulcea and Cassius Ionescu~Tulcea, \emph{Topics in the theory
  of lifting}, Ergebnisse der Mathematik und ihrer Grenzgebiete, Band 48,
  Springer-Verlag New York Inc., New York, 1969.

\bibitem[Joh:1979]{Johnson79}
Russell~A. Johnson, \emph{Existence of a strong lifting commuting with a
  compact group of transformations. {II}}, Pacific J. Math. \textbf{82} (1979),
  no.~2, 457--461.

\bibitem[Joh:1980a]{Johnson80b}
Russell~A. Johnson, \emph{Strong liftings commuting with minimal distal flows},
  Pacific J. Math. \textbf{90} (1980), no.~1, 77--85.

\bibitem[Joh:1980b]{Johnson80}
Russell~A. Johnson, \emph{Strong liftings which are not {B}orel liftings},
  Proc. Amer. Math. Soc. \textbf{80} (1980), no.~2, 234--236.

\bibitem[Kna:2002]{Knapp02}
Anthony~W. Knapp, \emph{Lie groups beyond an introduction}, second ed.,
  Progress in Mathematics, vol. 140, Birkh\"auser Boston Inc., Boston, MA,
  2002.

\bibitem[Kor:1969]{Koranyi69}
Adam Kor{\'a}nyi, \emph{Boundary behavior of {P}oisson integrals on symmetric
  spaces}, Trans. Amer. Math. Soc. \textbf{140} (1969), 393--409.

\bibitem[KP:1983]{Kupka-Prikry83}
Joseph Kupka and Karel Prikry, \emph{Translation-invariant {B}orel liftings
  hardly ever exist}, Indiana Univ. Math. J. \textbf{32} (1983), no.~5,
  717--731.

\bibitem[KW:1971]{Knapp-Williamson}
Anthony~W. Knapp and Richard~E. Williamson, \emph{Poisson integrals and
  semisimple groups}, J. Analyse Math. \textbf{24} (1971), 53--76.

\bibitem[Leb:1910]{Lebesgue10}
Henri Lebesgue, \emph{{Sur l'int\'egration des fonctions discontinues}}, {Ann.
  Sci. \'Ec. Norm. Sup.} \textbf{27} (1910), 361--450 (French).

\bibitem[Los:1979]{Losert79}
Viktor Losert, \emph{A measure space without the strong lifting property},
  Math. Ann. \textbf{239} (1979), no.~2, 119--128.

\bibitem[Mah:1958]{Maharam}
Dorothy Maharam, \emph{On a theorem of von {N}eumann}, Proc. Amer. Math. Soc.
  \textbf{9} (1958), 987--994.

\bibitem[Mil:1958]{Milnor58}
John Milnor, \emph{On the existence of a connection with curvature zero},
  Comment. Math. Helv. \textbf{32} (1958), 215--223.

\bibitem[Mon:2001]{Monod}
Nicolas Monod, \emph{{Continuous bounded cohomology of locally compact
  groups}}, {Lecture Notes in Mathematics 1758, Springer, Berlin}, 2001.

\bibitem[Mon:2006]{MonodICM}
Nicolas Monod, \emph{{An invitation to bounded cohomology}}, {Proceedings of
  the international congress of mathematicians (ICM), Madrid, Spain, August
  22--30, 2006. Volume II: Invited lectures. Z\"urich: European Mathematical
  Society, 1183--1211}, 2006.

\bibitem[Mon:2011]{Monod_exact}
Nicolas Monod, \emph{A note on topological amenability}, Int. Math. Res. Not.
  IMRN (2011), no.~17, 3872--3884.

\bibitem[Mon:2013]{Monod_PNAS}
Nicolas Monod, \emph{Groups of piecewise projective homeomorphisms}, Proc.
  Natl. Acad. Sci. USA \textbf{110} (2013), no.~12, 4524--4527.

\bibitem[MZ:1939]{Marcinkiewicz-Zygmund39}
J{\'o}zef Marcinkiewicz and Antoni~Szczepan Zygmund, \emph{On the summability
  of double {F}ourier series}, Fundam. Math. \textbf{32} (1939), 122--132.

\bibitem[MZ:1955]{Montgomery-Zippin}
Deane Montgomery and Leo Zippin, \emph{Topological transformation groups},
  Interscience Publishers, New York-London, 1955.

\bibitem[Rud:1987]{RudinRCA}
Walter Rudin, \emph{Real and complex analysis}, third ed., McGraw-Hill Book
  Co., New York, 1987.

\bibitem[She:1983]{Shelah83}
Saharon Shelah, \emph{Lifting problem of the measure algebra}, Israel J. Math.
  \textbf{45} (1983), no.~1, 90--96.

\bibitem[Sie:1920]{Sierpinski20}
Wac{\l}aw Sierpi{\'n}ski, \emph{{Sur un probl{\`e}me concernant les ensembles
  mesurables superficiellement}}, Fundamenta Mathematicae \textbf{1} (1920),
  112--115.

\bibitem[Tal:1982]{Talagrand82}
Michel Talagrand, \emph{La pathologie des rel\`evements invariants}, Proc.
  Amer. Math. Soc. \textbf{84} (1982), no.~3, 379--382.

\bibitem[vN:1931]{vonNeumann31}
Johann von Neumann, \emph{Algebraische {R}epr{\"a}sentanten der {F}unktionen
  ,,bis auf eine {M}enge vom {M}a{\ss}e {N}ull\lq\lq}, J. Reine Angew. Math.
  \textbf{165} (1931), 109--115.

\bibitem[vNS:1935]{vonNeumann-Stone}
Johann von Neumann and Marshall~Harvey Stone, \emph{{The determination of
  representative elements in the residual classes of a {B}oolean algebra.}},
  {Fundam. Math.} \textbf{25} (1935), 353--378 (English).

\bibitem[VV:1916]{VanVleck}
Edward~Burr Van~Vleck, \emph{Current tendencies of mathematical research},
  Bull. Amer. Math. Soc. \textbf{23} (1916), no.~1, 1--13.

\bibitem[vW:1977]{vonWeizsaecker}
Heinrich von Weizs{\"a}cker, \emph{Eine notwendige {B}edingung f\"ur die
  {E}xistenz invarianter masstheoretischer {L}iftings}, Arch. Math. (Basel)
  \textbf{28} (1977), no.~1, 91--97.

\bibitem[Woo:1971]{Wood71}
John~W. Wood, \emph{Bundles with totally disconnected structure group},
  Comment. Math. Helv. \textbf{46} (1971), 257--273.

\bibitem[You:1913]{Young13}
William~Henry Young, \emph{On {F}unctions and their {A}ssociated {S}ets of
  {P}oints}, Proc. London Math. Soc. \textbf{12} (1913), no.~1, 260--287.

\end{thebibliography}
\end{document}